\documentclass[11pt]{article}

\usepackage{amssymb, amsmath, amsthm, graphicx}
\usepackage[left=1in,top=1in,right=1in]{geometry}

\usepackage{subfigure}

      \theoremstyle{plain}
      \newtheorem{theorem}{Theorem}[section]
      \newtheorem{lemma}[theorem]{Lemma}
            \newtheorem{problem}[theorem]{Problem}
      \newtheorem{corollary}[theorem]{Corollary}
            \newtheorem{observation}[theorem]{Observation}

      \theoremstyle{definition}

      \theoremstyle{remark}

\def\twr{\mbox{\rm twr}}
\def\conv{\mbox{\rm conv}}

\title{A Ramsey-type result for geometric $\ell$-hypergraphs}
\author{Dhruv Mubayi\thanks{Department of Mathematics, Statistics, and Computer Science, University of Illinois, Chicago, IL, 60607 USA.  Research partially supported by NSF grant DMS-0969092. Email: {\tt mubayi@math.uic.edu}} \and Andrew Suk\thanks{Massachusetts Institute of Technology, Cambridge, MA. Supported by an NSF Postdoctoral
Fellowship and by Swiss National Science Foundation Grant 200021-125287/1. Email: {\tt asuk@math.mit.edu}.}}

\begin{document}
\maketitle

\medskip

\begin{abstract}

Let $n \ge \ell \ge 2$ and $q \ge 2$. We consider the minimum $N$ such that whenever we have $N$ points in the plane in general position and the $\ell$-subsets of these points are colored with $q$ colors, there is a subset $S$ of $n$ points all of whose $\ell$-subsets have the same color and furthermore $S$ is in convex position. This combines two classical areas of intense study over the last 75 years: the Ramsey problem for hypergraphs and the Erd\H os-Szekeres theorem on convex configurations in the plane.  For the special case $\ell = 2$, we establish a single exponential bound on the minimum $N$, such that every complete $N$-vertex geometric graph whose edges are colored with $q$ colors, yields a monochromatic convex geometric graph on $n$ vertices.

 For fixed $\ell \ge 2$ and $q \ge 4$, our results determine the correct exponential tower growth rate for $N$ as a function of $n$, similar to the usual hypergraph Ramsey problem, even though we require our monochromatic set to be in convex position.  Our results also apply to the case of $\ell=3$ and $q=2$  by using a geometric variation of the stepping up lemma of  Erd\H os and Hajnal.
This is in contrast to the fact that the upper and lower bounds  for the usual 3-uniform hypergraph Ramsey problem for two colors differ by one exponential in the tower.
\end{abstract}

\section{Introduction}

The classic 1935 paper of Erd\H os and Szekeres \cite{es} entitled \emph{A Combinatorial Problem in Geometry} was a starting point of a very rich discipline within combinatorics: Ramsey theory (see, e.g., \cite{graham}).  The term \emph{Ramsey theory} refers to a large body of deep results in mathematics which have a common theme: ``Every large system contains a large well-organized subsystem."  Motivated by the observation that any five points in the plane in general position\footnote{A planar point set $P$ is in \emph{general position}, if no three members are collinear.} must contain four members in convex position, Esther Klein asked the following.

\begin{problem}
\label{esn}
For every integer $n\geq 2$, determine the minimum $f(n)$, such that any set of $f(n)$ points in the plane in general position, contains $n$ members in convex position.
\end{problem}

\noindent Celebrated results of Erd\H os and Szekeres \cite{es,es2} imply that

\begin{equation}
\label{f}
2^{n-1} + 1 \leq f(n) \leq {2n - 4\choose n-2} \leq 2^{2n(1 - o(1))}.
\end{equation}

 \noindent They conjectured that $f(n) = 2^{n-1} + 1$, and Erd\H os offered a \$500 reward for a proof.  Despite much attention over the last 75 years, the constant factors in the exponents have not been improved.

  In the same paper \cite{es}, Erd\H os and Szekeres gave another proof of a classic result due to Ramsey \cite{ramsey} on hypergraphs.  An $\ell$-uniform hypergraph $H$ is a pair $(V,E)$, where $V$ is the vertex set and $E\subset {V\choose \ell}$ is the set of edges.  We denote $K^{\ell}_n = (V,E)$ to be the complete $\ell$-uniform hypergraph on an $n$-element set $V$, where $E = {V\choose \ell}$.  When $\ell = 2$, we write $K^2_n = K_n$.  Motivated by obtaining good quantitative bounds on $f(n)$, Erd\H os and Szekeres looked at the following problem.

 \begin{problem}
 \label{rn}
 For every integer $n\geq 2$, determine the minimum integer $r(K_n,K_n)$, such that any two-coloring on the edges of a complete graph $G$ on $r(K_n,K_n)$ vertices, yields a monochromatic copy of $K_n$.
 \end{problem}

\noindent Erd\H os and Szekeres \cite{es} showed that $r(K_n,K_n) \leq 2^{2n}$.  In \cite{erdos2}, Erd\H{o}s gave a construction showing that $r(K_n,K_n)>2^{n/2}$.  Despite much attention over the last 65 years, the constant factors in the exponents have not been improved.

Generalizing Problem \ref{rn} to $q$-colors and $\ell$-uniform hypergraphs has also be studied extensively.  Let $r(K^{\ell}_n;q)$ be the least integer $N$, such that any $q$-coloring on the edges of a complete $N$-vertex $\ell$-uniform hypergraph $H$, yields a monochromatic copy of $K^{\ell}_n$.  We will also write

$$r(K^{\ell}_n;q)  = r(\underbrace{K^{\ell}_n,K^{\ell}_n,...,K^{\ell}_n}_{q\textnormal{ times}}).$$

\noindent Erd\H os, Hajnal, and Rado \cite{erdos3, rado} showed that there are positive constants $c$ and $c'$ such that

\begin{equation}
\label{r3}
2^{cn^2} < r(K^3_n,K^3_n) < 2^{2^{c'n}}.
\end{equation}

\noindent
They also conjectured that $r(K^3_n,K^3_n)  > 2^{2^{cn}}$ for some constant $c > 0$, and Erd\H os offered a \$500 reward for a proof.  For $\ell\geq 4$, there is also a difference of one exponential between the known upper and lower bounds for $r(K^{\ell}_n,K^{\ell}_n)$, namely,

\begin{equation}
\label{rl}
\twr_{\ell-1}(cn^2) \leq  r(K^{\ell}_n,K^{\ell}_n) \leq \twr_{\ell}(c'n),
\end{equation}

\noindent where $c$ and $c'$ depend only on $\ell$, and the tower function $\twr_{\ell}(x)$ is defined by $\twr_1(x) = x$ and $\twr_{i + 1} = 2^{\twr_i(x)}$.   As Erd\H os and Rado have shown \cite{rado}, the upper bound in equation (\ref{rl}) easily generalizes to $q$ colors, implying that $r(K^{\ell}_n;q) \leq \twr_{\ell}(c'n)$, where $c' = c'(\ell,q)$.  On the other hand, for $q\geq 4$ colors, Erd\H os and Hajnal (see \cite{graham}) showed that $r(K^{\ell}_n;q)$ does indeed grow as a $\ell$-fold exponential tower in $n$, proving that $r(K^{\ell}_n;q) = \twr_{\ell}(\Theta(n))$.  For $q = 3$ colors, Conlon, Fox, and Sudakov \cite{conlon} modified the construction of Erd\H os and Hajnal to show that $r(K^{\ell}_n,K^{\ell}_n,K^{\ell}_n,) > \twr_{\ell}(c\log^2n)$.

Interestingly, both Problems \ref{esn} and \ref{rn} can be asked simultaneously for geometric graphs, and a similar-type problem can be asked for geometric $\ell$-hypergraphs.  A \emph{geometric $\ell$-hypergraph} $H$ \emph{in the plane} is a pair $(V,E)$, where $V$ is a set of points in the plane in general position, and $E\subset {V\choose \ell}$ is a collection of $\ell$-tuples from $V$.  When $\ell =2$ ($\ell = 3$), edges are represented by straight line segments (triangles) induced by the corresponding vertices.  The sets $V$ and $E$ are called the \emph{vertex set} and \emph{edge set} of $H$, respectively.  A geometric hypergraph $H$ is \emph{convex}, if its vertices are in convex position.

Geometric graphs ($\ell = 2$) have been studied extensively, due to their wide range of applications in combinatorial and computational geometry (see \cite{pach}, \cite{kar1,kar2}).  Complete convex geometric graphs are very well understood, and are some of the most \emph{well-organized} geometric graphs (if not the most).  Many long standing problems on complete geometric graphs, such as its crossing number \cite{ab}, number of halving-edges \cite{wagner}, and size of crossing families \cite{aronov}, become trivial when its vertices are in convex position.  There has also been a lot of research on geometric 3-hypergraphs in the plane, due to its connection to the  \emph{$k$-set problem} in $\mathbb{R}^3$ (see \cite{mat},\cite{suk},\cite{dey}).  In this paper, we study the following problem which combines Problems \ref{esn} and \ref{rn}.

\begin{problem}
\label{georam}
Determine the minimum integer  $g(K_n^{\ell};q)$, such that any $q$-coloring on the edges of a complete geometric $\ell$-hypergraph $H$ on $g(K_n^{\ell};q)$ vertices, yields a monochromatic convex $\ell$-hypergraph on $n$ vertices.

\end{problem}

Another chromatic variant of the Erd\H os-Szekeres convex polygon problem was studied by Devillers et al.~\cite{dev}, where they considered colored points in the plane rather than colored edges.

We will also write

 $$g(K_n^{\ell};q) = g(\underbrace{K_n^{\ell},...,K_n^{\ell}}_{q\textnormal{ times}}).$$

\noindent Clearly we have $g(K_n^{\ell};q) \geq \max\{r(K^{\ell}_n;q),f(n)\}$.  An easy observation shows that by combining equations (\ref{f}) and (\ref{rl}), we also have

$$g(K_n^{\ell};q)  \leq f(r(K^{\ell}_n;q)) \leq \twr_{\ell + 1}(cn),$$

\noindent where $c = c(\ell,q)$.  Our main results are the following two exponential improvements on the upper bound of $g(K^{\ell}_n;q)$.

\begin{theorem}
\label{main}

For geometric graphs, we have

$$2^{q(n-1)} < g(K_n;q)  \leq 2^{8qn^2\log (qn)}.$$

\end{theorem}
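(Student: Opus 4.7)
The plan has two parts.

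\textbf{Lower bound.} To show $g(K_n;q) > 2^{q(n-1)}$, I would iterate the Erd\H os-Szekeres lower-bound construction $q$ times. Fix a set $P_0$ of $2^{n-1}$ points in general position with no convex $n$-gon, which exists by (\ref{f}). Build point sets $P_1, P_2, \ldots, P_q$ recursively, where $P_k$ is obtained from $P_{k-1}$ by replacing each of its points with a sufficiently small scaled copy of $P_0$. Then $|P_q| = 2^{q(n-1)}$, and every point of $P_q$ is naturally labeled by a tuple $(i_1, \ldots, i_q) \in [2^{n-1}]^q$. Color the edge between $(i_1, \ldots, i_q)$ and $(j_1, \ldots, j_q)$ by the least index $k$ at which $i_k \ne j_k$. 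Any monochromatic convex $n$-gon of color $k$ would consist of $n$ points agreeing in their first $k-1$ coordinates and lying in $n$ distinct sub-clusters at scale $k$; once the sub-scales are chosen small enough, those $n$ points are in convex position in the plane if and only if the $n$ corresponding macro-points of the level-$k$ copy of $P_0$ are in convex position, contradicting the defining property of $P_0$.

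\textbf{Upper bound.} To show $g(K_n;q) \le 2^{8qn^2 \log(qn)}$, I would adapt the cap-cup method of Erd\H os-Szekeres to the $q$-colored setting. Sort the $N$ points from left to right as $p_1, \ldots, p_N$. For each point $p_i$, each color $c \in [q]$, and each type $t \in \{\textnormal{cap}, \textnormal{cup}\}$, let $\phi_{c,t}(i)$ denote the length of the longest monochromatic-color-$c$ $t$-structure ending at $p_i$, where \emph{monochromatic} means every one of the $\binom{k}{2}$ pairs among the $k$ vertices of the structure has color $c$. A monochromatic cap or cup of length $n$ is automatically a monochromatic convex $n$-gon, so it suffices to produce one; hence I assume for contradiction that $\phi_{c,t}(i) < n$ for all $i, c, t$, so that every profile $\phi(i)$ lies in a bounded range.

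The main obstacle, which I expect to dominate the work, is that a naive pigeonhole on profiles does not suffice: unlike in the uncolored cap-cup argument, two points $p_i$ and $p_j$ with matching profiles do not automatically allow extension of a monochromatic structure, since the newly introduced edges from $p_j$ back to the existing cap or cup might not all carry the required color. I plan to overcome this by iterating the pigeonhole step within the large common-profile subsets, effectively peeling off one color at a time, with a profile domain of size about $qn$ at each round. Over $O(qn^2)$ rounds -- reflecting the $\binom{n}{2}$ pair-constraints of the target $n$-gon multiplied across all $q$ colors -- the cumulative pigeonhole loss is $(qn)^{O(qn^2)} = 2^{O(qn^2 \log(qn))}$, matching the claimed bound. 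An alternative packaging of the same idea is to $2q^3$-color point triples by the cap/cup type together with the three incident edge colors, and then invoke a single-exponential Ramsey-type bound exploiting the monotone order-type structure of the cap/cup component (the feature that rules out the usual double-exponential losses in $3$-uniform Ramsey).
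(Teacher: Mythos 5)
Your lower bound is correct and is essentially the paper's own construction: the paper phrases it as an induction on $q$ (replace each vertex of the $(q-1)$-color construction by a tiny affine copy of the Erd\H os--Szekeres set $G_0$ and color the new internal edges with color $q$), which, unrolled, is exactly your $q$-fold nested clustering with each edge colored by the first coordinate of disagreement. Your verification --- a color-$k$ monochromatic set must occupy $n$ distinct scale-$k$ sub-clusters of a common scale-$(k-1)$ cluster, and convexity then transfers to the macro-points once the copies are small enough --- is the same argument.

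The upper bound, however, has a genuine gap. You correctly identify why the naive profile pigeonhole fails (the back-edges from a new point to an existing monochromatic cap or cup need not carry the right color), but neither of your proposed fixes is an argument. ``Iterating the pigeonhole within common-profile subsets, peeling off one color at a time'' never specifies what invariant survives from round to round or how a monochromatic convex structure is ever extended; and the ``alternative packaging'' (color triples by cap/cup type together with the three incident edge colors and invoke a single-exponential Ramsey bound) simply asserts the theorem's hardest ingredient: a monochromatic clique in a $2q^3$-colored $3$-uniform hypergraph costs a double exponential in general, and while the cap/cup component of your triple coloring is transitive, the edge-color component is not (knowing the colors of $ab,ac,bc,bd,cd$ says nothing about $ad$), so no off-the-shelf monotone-path or transitive-coloring Ramsey bound applies to the combined coloring. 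What the paper actually does is build a single sequence $p_1,\ldots,p_{qn^2}$ greedily: at step $t+1$ it takes the leftmost point of the residual set $S_t$, splits $S_t$ by the $t$ lines through $p_{t+1}$ and each earlier $p_i$ (so that every pair $p_i,p_j$ is consistently a cap or consistently a cup with all later points) and then by the color of the edge to $p_{t+1}$ (so that each selected point has a single color toward everything later), keeping the largest of the at most $q(t+1)$ parts; this yields $|S_t|\geq N/(q^t t!)-t$, which is where $2^{8qn^2\log(qn)}$ comes from. Pigeonholing on the $q$ possible colors of the selected points then yields $n^2$ points spanning a monochromatic complete geometric graph, and the cap/cup consistency defines two partial orders covering all pairs, so Dilworth extracts an $n$-cap or $n$-cup. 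Your round count $O(qn^2)$ and per-round loss $(qn)^{O(1)}$ happen to match this accounting, but the mechanism that makes the rounds compose --- first force a monochromatic \emph{clique} with globally consistent cap/cup behavior, and only afterwards extract convex position --- is missing from your sketch.
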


The argument used in the proof of Theorem \ref{main} above extends easily to hypergraphs, and for each fixed $\ell\ge 3$ it gives the bound $g(K^{\ell}_n;q) < \twr_{\ell}(O(n^2))$. David Conlon pointed out to us that one can improve this slightly as follows.

\begin{theorem}
\label{main2}
For geometric $\ell$-hypergraphs, when $\ell \geq 3$ and fixed, we have

$$g(K^{\ell}_n;q)  \leq \twr_{\ell}(cn),$$
\noindent where $c = O(q\log q)$.

\end{theorem}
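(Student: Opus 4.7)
The plan is to induct on $\ell$, using Theorem \ref{main} as the base case $\ell = 2$. For the inductive step, I would adapt the classical Erd\H os--Rado reduction from $\ell$-uniform Ramsey to $(\ell-1)$-uniform Ramsey to the geometric setting. The adaptation is essentially immediate: the Erd\H os--Rado argument is purely combinatorial and respects any fixed linear order on the vertex set, so we may run it with respect to the $x$-coordinate order of the given points, and the output will be a subsequence indexed in $x$-order.

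More concretely, given $N$ points $P$ in general position with a $q$-coloring of the $\ell$-subsets, I would order $P$ by $x$-coordinate and then run the greedy Erd\H os--Rado procedure that always selects the leftmost surviving point. After $M$ steps this produces a subsequence $v_1 < v_2 < \cdots < v_M$ (in $x$-order) that is \emph{end-homogeneous}: for any $\ell$-subset $\{v_{i_1}, \ldots, v_{i_\ell}\}$ with $i_1 < \cdots < i_\ell$, the color depends only on the $(\ell-1)$-tuple $(v_{i_1}, \ldots, v_{i_{\ell-1}})$ of leftmost elements. The standard analysis of this procedure shows that $N \leq \twr_\ell(c n)$ starting points are enough to produce $M \geq g(K^{\ell-1}_n; q)$, where the inductive hypothesis controls $M \leq \twr_{\ell-1}(c' n)$; the constant $c = O(q \log q)$ tracks the factor of $q$ paid at each Erd\H os--Rado step.

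End-homogeneity defines a $q$-coloring of the $(\ell-1)$-subsets of $\{v_1, \ldots, v_M\}$: assign to $\{v_{i_1}, \ldots, v_{i_{\ell-1}}\}$ the common color of $\{v_{i_1}, \ldots, v_{i_{\ell-1}}, v_j\}$ over any $j > i_{\ell-1}$. Applying the inductive hypothesis to the resulting geometric $(\ell-1)$-hypergraph on $\{v_1, \ldots, v_M\}$ yields $n$ points in convex position whose $(\ell-1)$-subsets are monochromatic; by end-homogeneity, the $\ell$-subsets of the same $n$ points inherit this color, which completes the induction. The main obstacle is purely bookkeeping: verifying that the exponent at the top of the tower comes out as $O(q \log q) \cdot n$ rather than something larger, which is the same arithmetic that gives $r(K^\ell_n; q) \leq \twr_\ell(O(q \log q) \cdot n)$ in the classical non-geometric setting, so no new ideas beyond the reduction to the $x$-sorted order should be needed here.
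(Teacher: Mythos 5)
Your reduction itself is sound as far as it goes: the Erd\H os--Rado end-homogenization respects the $x$-order, the surviving points remain in general position, and convexity of the final $n$-set is correctly delegated to the lower-uniformity case. The problem is quantitative, and it is exactly the issue the paper flags immediately before stating this theorem. Your induction bottoms out at $\ell=2$, where the only available bound is Theorem \ref{main}: $g(K_n;q)\le 2^{8qn^2\log(qn)}$. This is $2^{O(n^2\log n)}$, not $2^{O(n)}$ (whether $g(K_n;q)\le 2^{O(n)}$ holds is open). Feeding $M=g(K_n;q)$ into the stepping-down estimate $N\approx q^{\binom{M}{2}}$ gives only $g(K^3_n;q)\le \twr_3(O(n^2\log n))$, and this polynomial loss then propagates to the top of the tower for every $\ell$, yielding $\twr_{\ell}(O(n^2\log n))$ rather than $\twr_{\ell}(cn)$. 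Your analogy with the abstract bound $r(K^{\ell}_n;q)\le\twr_{\ell}(O(q\log q)\,n)$ fails precisely because in the abstract setting the base case $r(K_n;q)\le q^{qn}$ has a linear exponent, while the geometric base case does not. So the ``bookkeeping'' is not benign; it is where the claimed bound is lost.

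The paper's proof takes a different route that avoids the $\ell=2$ case entirely: it reduces to the ordinary, non-geometric Ramsey number with a few extra colors. For $\ell=3$ (Lemma \ref{david}), one augments the $q$-coloring with the orientation (clockwise or counterclockwise) of each triple, obtaining a $2q$-coloring; a set of $n$ points all of whose triples have the same orientation is in convex position (Tarsi's observation), so $g(K^3_n;q)\le r(K^3_n;2q)$. For $\ell\ge4$ (Lemma \ref{david2}), one adds a single new color marking the $\ell$-tuples not in convex position; a monochromatic $n$-set with $n\ge 4^{\ell}$ cannot carry the new color by the Erd\H os--Szekeres theorem, and having every $\ell$-subset (hence every $4$-subset) in convex position forces the whole set into convex position, so $g(K^{\ell}_n;q)\le r(K^{\ell}_n;q+1)$. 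The known bound $r(K^{\ell}_n;q')\le\twr_{\ell}(O(q'\log q')\,n)$ then gives the theorem. If you want to salvage your induction, you must start it at $\ell=3$ with Lemma \ref{david} as the base case rather than at $\ell=2$ with Theorem \ref{main}.
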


\noindent By combining Theorems \ref{main}, \ref{main2}, and the fact that $g(K_n^{\ell};q) \geq r(K^{\ell}_n;q)$, we have the following corollary.

\begin{corollary}
For fixed $\ell$ and $q\geq 4$, we have $g(K_n^{\ell};q) = \twr_{\ell}(\Theta(n))$.

\end{corollary}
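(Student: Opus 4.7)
The plan is to sandwich $g(K_n^{\ell};q)$ between matching upper and lower bounds of the form $\twr_{\ell}(\Theta(n))$, using only ingredients already at hand.

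For the upper bound, I would invoke Theorem~\ref{main2} directly (in the range $\ell \geq 3$, which is where the statement is nontrivial), obtaining $g(K_n^{\ell};q) \leq \twr_{\ell}(cn)$ with $c = O(q\log q)$. No further work is needed in this direction.

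For the matching lower bound, I would start from the trivial inequality $g(K_n^{\ell};q) \geq r(K_n^{\ell};q)$ recorded in the excerpt: one can realize any abstract $q$-coloring of $\binom{[N]}{\ell}$ by placing $N$ points of the plane in general position (for instance, on the moment curve, so that every subset is automatically in convex position when $\ell=2$, or just taking a generic perturbation in general) and transferring the coloring to the $\ell$-tuples; the additional demand that the monochromatic $n$-set be in convex position can only raise $N$. Then I would cite the four-color Erd\H{o}s--Hajnal stepping-up construction referenced via \cite{graham} in the excerpt, which gives $r(K_n^{\ell};q) \geq \twr_{\ell}(c'n)$ for every $q \geq 4$. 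Putting the two estimates together yields $g(K_n^{\ell};q) = \twr_{\ell}(\Theta(n))$.

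There is essentially no genuine obstacle here: the corollary is a one-line combination of Theorem~\ref{main2} with a classical Ramsey lower bound, and the real substance of the statement is the qualitative observation that for $q \geq 4$ colors and fixed $\ell \geq 3$, forcing the monochromatic $n$-set into convex position does not push the Ramsey threshold up by an entire level of the tower; it only changes the linear constant in the top exponent. The case $\ell = 2$ lies outside what the corollary as written can deliver, since the upper bound supplied by Theorem~\ref{main} is $2^{O(n^2 \log n)}$ rather than $2^{O(n)}$, so the statement should be read with the implicit restriction $\ell \geq 3$.
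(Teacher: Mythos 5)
Your proposal is correct and matches the paper's own (one-line) derivation: the upper bound is Theorem~\ref{main2} and the lower bound is $g(K_n^{\ell};q)\geq r(K_n^{\ell};q)=\twr_{\ell}(\Theta(n))$ via the Erd\H{o}s--Hajnal result for $q\geq 4$. Your remark that the statement must implicitly assume $\ell\geq 3$ (since Theorem~\ref{main} only gives $2^{O(n^2\log n)}$ for $\ell=2$) is also consistent with the paper, whose concluding remarks restate the corollary only for $\ell\geq 3$.
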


As mentioned above, there is an exponential difference between the known upper and lower bounds for $r(K^3_n,K^3_n)$.  Hence, for two-colorings on geometric 3-hypergraphs in the plane, equation (\ref{r3}) implies

$$g(K_n^3,K_n^3) \geq r(K^3_n,K^3_n) \geq 2^{cn^2}.$$

\noindent Our next result establishes an exponential improvement in the lower bound of $g(K_n^3,K_n^3)$, showing that $g(K_n^3,K_n^3)$ does indeed grow as a 3-fold exponential tower in a power of $n$.  One noteworthy aspect of this lower bound is that the construction is a geometric version of the famous stepping up lemma of Erd\H os and Hajnal \cite{erdos3} for sets.  The lemma produces a $q'$-coloring $\chi'$ of ${[2^n] \choose \ell+1}$ from a $q$-coloring $\chi$ of ${[n] \choose \ell}$ for appropriate $q'>q$, where the largest monochromatic clique of $(\ell+1)$-sets under $\chi'$ is not too much larger than the largest monochromatic clique of $\ell$-sets under $\chi$  (see \cite{graham} for more details about the stepping up lemma).  While it is a major open problem to apply this method to $r(K^3_n,K^3_n)$ and improve the lower bound in equation (\ref{r3}), we are able to achieve this in the geometric setting as shown below.

\begin{theorem}
\label{lmain}
For geometric 3-hypergraphs in the plane, we have
$$g(K_n^3,K_n^3) \geq 2^{2^{cn}},$$

\noindent where $c$ is an absolute constant.  In particular, $g(K_n^3,K_n^3) = \twr_3(\Theta(n)).$

\end{theorem}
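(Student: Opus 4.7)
The plan is to adapt the Erdős–Hajnal stepping-up construction to the geometric setting. Classically, stepping up from a $2$-coloring of pairs to a coloring of triples requires additional colors, because one must encode both an inherited color value and a structural bit indicating the relative order of $\delta(u,v)$ and $\delta(v,w)$, where $\delta(\epsilon,\epsilon')$ denotes the largest coordinate on which two binary strings differ. The key observation is that in the plane the orientation of a triple already supplies this bit for free, so two colors will suffice.

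Fix $t=\lfloor n/2\rfloor$ and, using Erdős's lower bound on $r(K_t,K_t)$, pick a $2$-coloring $\chi\colon\binom{[m]}{2}\to\{0,1\}$ with $m\ge 2^{t/2}\ge 2^{cn}$ and no monochromatic $K_t$. Construct a point set $P\subset\RR^2$ of $N=2^m$ points in general position, labeled by the binary strings $\epsilon\in\{0,1\}^m$, so that the $x$-coordinate of $p(\epsilon)$ matches its lexicographic rank and the orientation of every lex-ordered triple $(p(u),p(v),p(w))$ is counterclockwise when $\delta(u,v)<\delta(v,w)$ and clockwise when $\delta(u,v)>\delta(v,w)$ (these cases are exhaustive, as $\delta(u,v)\neq\delta(v,w)$). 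Such a $P$ is built recursively: given a realization $P_k$ for $\{0,1\}^k$, form $P_{k+1}$ by placing one copy of $P_k$ in a small box (the ``top-bit $0$'' half) together with a copy translated by a vector $(T_x,T_y)$ in a second box (the ``top-bit $1$'' half), choosing $T_y/T_x$ strictly larger than every slope occurring in $P_k$. A short $2\times 2$ determinant computation then verifies that every cross-half triple receives the orientation dictated by the new top bit, while within-half triples inherit the rule from $P_k$; a generic choice of $(T_x,T_y)$ preserves general position. Finally, define $\chi'\colon\binom{P}{3}\to\{0,1\}$ by $\chi'(p(u),p(v),p(w))=\chi(\{\delta(u,v),\delta(v,w)\})$.

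Assume toward contradiction that $\{p(\epsilon_1),\ldots,p(\epsilon_n)\}$, listed in lex order, is a convex subset monochromatic in $\chi'$ of color $c$. Writing $d_i=\delta(\epsilon_i,\epsilon_{i+1})$, one has $\delta(\epsilon_a,\epsilon_b)=\max(d_a,\ldots,d_{b-1})$. The standard characterization of convex position for points with distinct $x$-coordinates says the orientation of every triple depends only on the middle index, and therefore for each $j\in[2,n-1]$ the sign of $\max(d_i,\ldots,d_{j-1})-\max(d_j,\ldots,d_{k-1})$ is constant over $1\le i<j<k\le n$. Letting $i^*$ denote the position of the maximum of $(d_1,\ldots,d_{n-1})$ and comparing the cases $(i,k)=(j-1,j+1)$ and $(i,k)=(1,n)$, one concludes that this sign is negative for $j\le i^*$ and positive for $j>i^*$, forcing $(d_i)$ to be unimodal: $d_1<d_2<\cdots<d_{i^*}>d_{i^*+1}>\cdots>d_{n-1}$. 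Restricting to triples $(i,j,k)$ with $k\le i^*+1$ gives $\delta_1=d_{j-1}$ and $\delta_2=d_{k-1}$, so the monochromaticity of $\chi'$ translates to $\chi(d_a,d_b)=c$ for all $1\le a<b\le i^*$; a symmetric argument yields $\chi(d_a,d_b)=c$ for all $i^*<a<b\le n-1$. One of these monochromatic $\chi$-cliques has size at least $\max(i^*,n-1-i^*)\ge\lfloor n/2\rfloor=t$, contradicting the choice of $\chi$. Hence no such convex $n$-subset exists, yielding $g(K_n^3,K_n^3)\ge N\ge 2^{2^{cn}}$.

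The principal obstacle is the geometric realization: the recursive embedding must be arranged so that the orientation induced by the most recently added bit strictly dominates the geometry inherited from the previous level, which is a delicate but tractable scaling argument. Once that is in place, the extraction of the large monochromatic $\chi$-clique from a hypothetical convex mono $n$-subset is a clean combinatorial consequence of the unimodality forced by convex position.
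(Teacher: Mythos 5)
Your proposal is correct and follows essentially the same route as the paper: the same recursive point construction realizing the stepping-up parameter $\delta$ through triple orientations, the same coloring $\chi'(u,v,w)=\chi(\delta(u,v),\delta(v,w))$, and the same key use of convex position to force the sequence $d_i=\delta(\epsilon_i,\epsilon_{i+1})$ to be unimodal (the paper phrases this as ruling out local minima of the $\delta$-sequence). The only cosmetic difference is the endgame: you extract a monochromatic $\chi$-clique of size $\lfloor n/2\rfloor$ from the longer monotone half of an $n$-point convex set, whereas the paper starts with $2n$ points and splits into cases according to whether a monotone run of length $n$ exists.
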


We systemically omit floor and ceiling signs whenever they are not crucial for the sake of clarity of presentation.  All logarithms are in base 2.

\section{Proof of Theorems \ref{main} and \ref{main2}}

Before proving Theorems \ref{main} and \ref{main2}, we will first define some notation.  Let $V = \{p_1,...,p_N\}$ be a set of $N$ points in the plane in general position ordered from left to right according to $x$-coordinate, that is, for $p_i = (x_i,y_i) \in \mathbb{R}^2$, we have $x_i < x_{i + 1}$ for all $i$.  For $i_1 < \cdots < i_t$, we say that $X = (p_{i_1},...,p_{i_t})$ forms an \emph{$t$-cup} (\emph{$t$-cap}) if $X$ is in convex position and its convex hull is bounded above (below) by a single edge.  See Figure~\ref{cupcap}. When $t = 3$, we will just say $X$ is a cup or a cap.

\begin{figure}[h]
\begin{center}
\includegraphics[width=280pt]{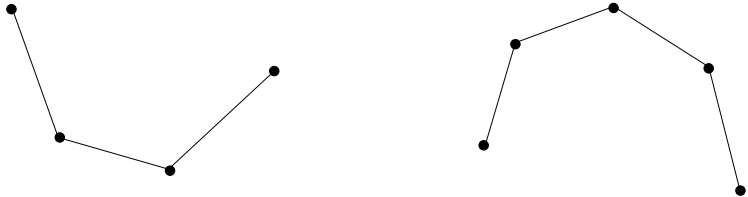}
  \caption{A 4-cup and a 5-cap.}\label{cupcap}
 \end{center}
\end{figure}

\noindent\textbf{Proof of Theorem \ref{main}.}  We first prove the upper bound.  Let $G = (V,E)$ be a complete geometric graph on $N = 2^{8qn^2\log (qn)}$ vertices, such that the vertices $V = \{v_1,...,v_N\}$ are ordered from left to right according to $x$-coordinate.  Let $\chi$ be a $q$-coloring on the edge set $E$.  We will recursively construct a sequence of vertices $p_1,...,p_t$ from $V$ and a subset $S_t \subset V$, where $t = 0,1,...,qn^2$ (when $t =0$ there are no vertices in the sequence), such that the following holds.

\begin{enumerate}

\item for any vertex $p_i$, all pairs $(p_i,p)$ where $p\in \{p_j: j > i\}\cup S_t$ have the same color, which we denote by $\chi'(p_i)$,

\item for every pair of vertices $p_i$ and $p_j$, where $i < j$, either $(p_i,p_j,p)$ is a cap for all $p \in \{p_k: k > j\}\cup S_t$, or $(p_i,p_j,p)$ is a cup for all $p \in \{p_k: k > j\}\cup S_t$,

    \item the set of points $S_t$ lies to the right of the point $p_{t}$, and

    \item $|S_t| \geq \frac{N}{q^tt!} - t$.

\end{enumerate}

We start with no vertices in the sequence, and set $S_0 = V$.  After obtaining vertices $\{p_1,...,p_t\}$ and $S_t$, we define $p_{t+1}$ and $S_{t+1}$ as follows.  Let $p_{t+1} = (x_{t+1},y_{t + 1}) \in \mathbb{R}^2$ be the smallest indexed element in $S_t$ (the left-most point), and let $H$ be the right half-plane $x > x_{t+1}$.  We define $t$ lines $l_1,...l_t$ such that $l_i$ is the line going through points $p_i,p_{t+1}$.  Notice that the arrangement $\cup_{i = 1}^t l_i$ partitions the right half-plane $H$ into $t+1$ cells.  See Figure~\ref{delta}. Since $V$ is in general position, by the pigeonhole principle, there exists a cell $\Delta\subset H$ that contains at least $(|S_t|-1)/(t+1)$ points of $S_t$.

\begin{figure}[h]
\begin{center}
\includegraphics[width=200pt]{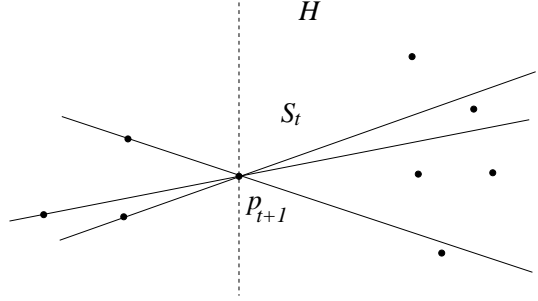}
  \caption{Lines partitioning the half-plane $H$.}\label{delta}
 \end{center}
\end{figure}

Let us call two elements $v'_1,v'_2 \in \Delta\cap S_t$ \emph{equivalent} if $\chi(p_{t+1},v'_1) = \chi(p_{t + 1},v'_2)$.  Hence, there are at most $q$ equivalence classes.  By setting $S_{t + 1}$ to be the largest of those classes, we have the recursive formula

$$|S_{t+ 1}| \geq \frac{|S_t| -1 }{(t + 1)q}.$$

\noindent Substituting in the lower bound on $|S_t|$, we obtain the desired bound

$$|S_{t +1}| \geq \frac{N}{(t+1)!q^{t+1}} - (t+1).$$
This shows that we can construct the sequence $p_1,\ldots,p_{t+1}$ and the set $S_{t+1}$ with the desired properties.  For $N = 2^{8qn^2\log (qn)}$, we have

\begin{equation}
\label{calc}
|S_{qn^2}| \geq  \frac{2^{8qn^2\log (qn)}}{(qn^2)!q^{qn^2}} - qn^2 \geq 1.
\end{equation}

\noindent Hence, $P_1=\{p_1,...,p_{qn^2}\}$ is well defined.  Since $\chi'$ is a $q$-coloring on $P_1$, by the pigeonhole principle, there exists a subset $P_2 \subset P_1$ such that $|P_2| = n^2$, and every vertex has the same color.  By construction of $P_2$, every pair in $P_2$ has the same color.  Hence these vertices induce a monochromatic geometric graph.

Now let $P_2 = \{p'_1,...,p'_{n^2}\}$.  We define partial orders $\prec_1,\prec_2$ on $P_2$, where $p'_i \prec_1 p'_j$ $(p'_i \prec_2 p'_j)$ if and only if $i < j$ and the set of points $P_2\setminus\{p'_1,...,p'_j\}$ lie above (below) the line going through points $p'_i$ and $p'_j$.  See Figure \ref{ex}.  By construction of $P_2$, $\prec_1,\prec_2$ are indeed partial orders and every two elements in $P_2$ are comparable by either $\prec_1$ or $\prec_2$.  By a corollary to Dilworth's Theorem \cite{dil} (see also Theorem 1.1 in \cite{foxsuk}), there exists a chain $p^{\ast}_1,...,p^{\ast}_n$ of length $n$ with respect to one of the partial orders.  Hence $(p^{\ast}_1,...,p^{\ast}_n)$ forms either an $n$-cap or an $n$-cup.  Therefore, these vertices induce a monochromatic convex geometric graph.

 \begin{figure}[h]
  \centering
    \subfigure[Example of $p'_i\prec_1p'_j$.]{\includegraphics[width=.4\textwidth]{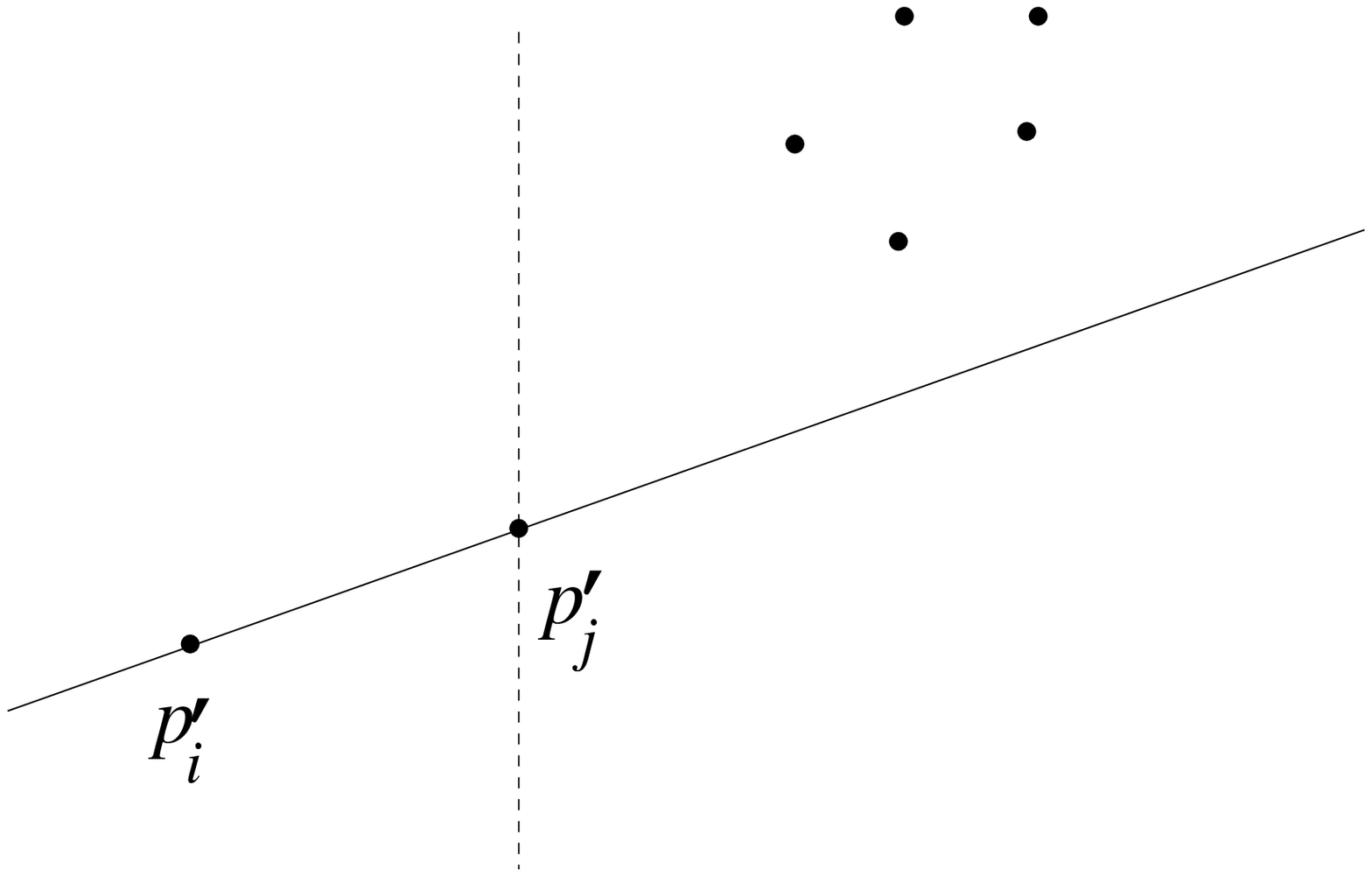}} \hspace{1cm}
        \subfigure[Example of $p'_i\prec_2p'_j$.]{\includegraphics[width=.4\textwidth]{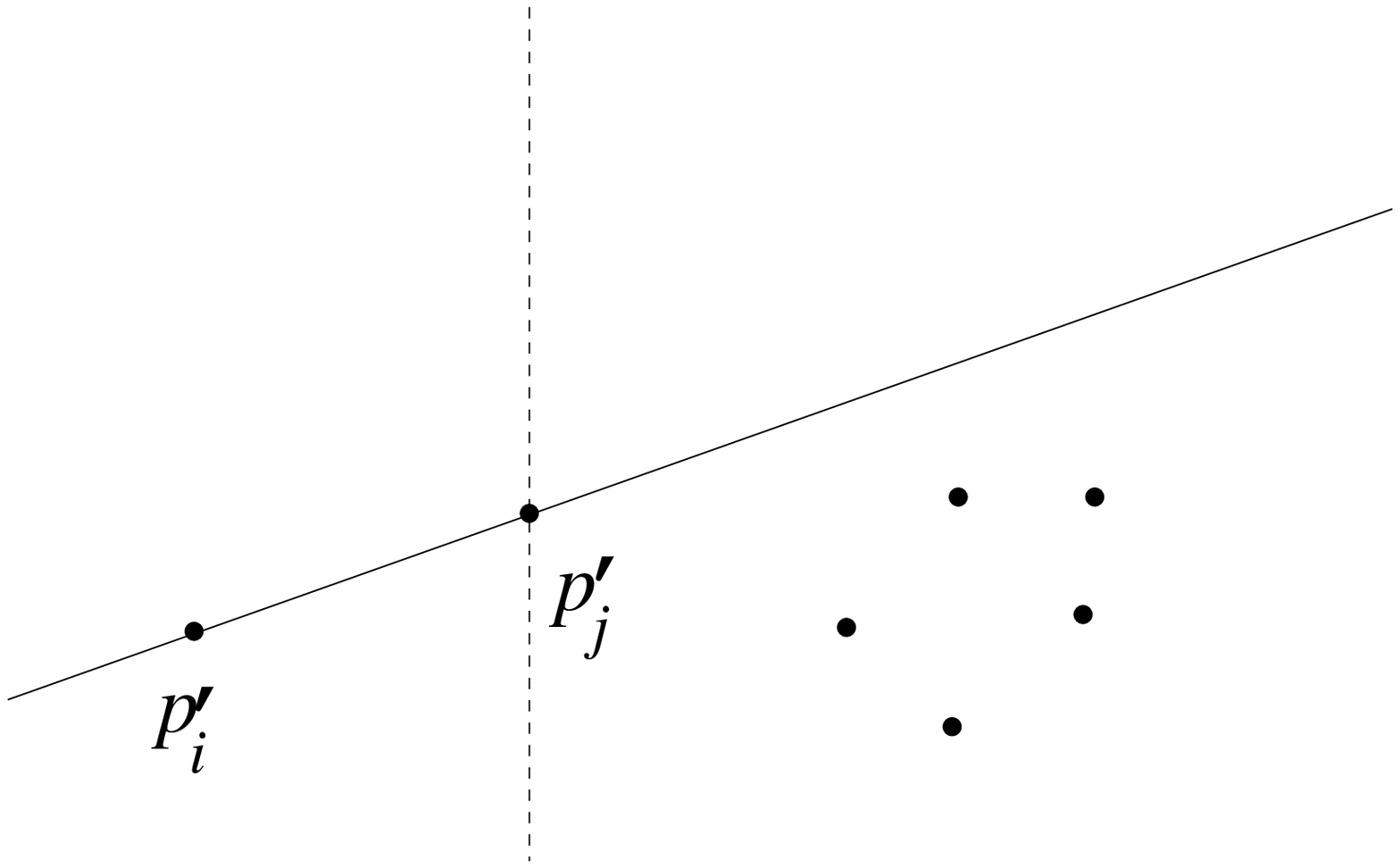}}
        \caption{Partial orders $\prec_1,\prec_2$.}  \label{ex}
\end{figure}

\medskip

For the lower bound, we proceed by induction on $q$.  The base case $q = 1$ follows by taking the complete geometric graph on $2^{n-1}$ vertices, whose vertex set does not have $n$ members in convex position.  This is possible by the construction of Erd\H os and Szekeres \cite{es2}.  Let $G_0$ denote this geometric graph. For $q > 1$, we inductively construct a complete geometric graph $G = (V,E)$ on $2^{(q-1)(n-1)}$ vertices, and a coloring $\chi:E \rightarrow \{1,2,...,q-1\}$ on the edges of $G$, such that $G$ does not contain a monochromatic convex geometric graph on $n$ vertices.  Now we replace each vertex $v_i \in G$ with a small enough copy\footnote{Obtained by an affine transformation.} of $G_0$, which we will denote as $G_i$, where all edges in $G_i$ are colored with the color $q$, and all edges between $G_i$ and $G_j$ have color $\chi(v_iv_j)$.  Then we have a complete geometric graph $G'$ on

$$2^{(q-1)(n-1)}2^{n-1} = 2^{q(n-1)}$$

\noindent vertices, such that $G'$ does not contain a monochromatic convex graph on $n$ vertices.

$\hfill\square$

\medskip

By following the proof above, one can show that $g(K^{\ell}_n;q) \leq \twr_{\ell}(O(n^2))$.  However, the following short argument due to David Conlon gives a  better bound. The proof uses an old idea of M. Tarsi (see \cite{mat} Chapter 3) that gave an upper bound on $f(n)$.

\begin{lemma}
\label{david}
For geometric 3-hypergraphs, we have $g(K^{3}_n;q) \leq r(K^{3}_n;2q) \leq 2^{2^{cn}}$, where $c = O(q\log q)$.
\end{lemma}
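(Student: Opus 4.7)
The plan is to use Tarsi's trick of pairing the given color with the cup/cap type of each triple, turning a $q$-coloring into a $2q$-coloring to which we apply the classical Erd\H os--Rado hypergraph Ramsey bound.

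First I would fix $N = r(K^3_n; 2q)$ and let $V = \{p_1, \ldots, p_N\}$ be any set of $N$ points in general position in the plane, ordered by $x$-coordinate. Given any $q$-coloring $\chi$ of $\binom{V}{3}$, I would define a refined coloring $\chi' : \binom{V}{3} \to \{1,\ldots,q\} \times \{\text{cup}, \text{cap}\}$ by setting $\chi'(\{p_i,p_j,p_k\}) = (\chi(\{p_i,p_j,p_k\}), \tau(p_i,p_j,p_k))$ for $i<j<k$, where $\tau$ records whether $(p_i,p_j,p_k)$ is a cup or a cap. Since the points are in general position, $\tau$ is well-defined on every triple, so $\chi'$ is a legitimate $2q$-coloring of the triples of $V$.

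Next, by the definition of $r(K^3_n; 2q)$, some set $S \subset V$ of $n$ points has $\chi'$-monochromatic triples. In particular every triple of $S$ has the same $\chi$-color, and every triple of $S$ is of the same type --- all cups or all caps. A standard observation (easily proved by induction on $n$) is that if the points of $S$, ordered left-to-right, have the property that every triple forms a cup, then $S$ itself forms an $n$-cup, and similarly for caps. Hence $S$ is in convex position, and all its $\chi$-colored triples agree, giving the required monochromatic convex geometric $3$-hypergraph on $n$ vertices. This establishes $g(K^3_n; q) \leq r(K^3_n; 2q)$.

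For the second inequality I would simply invoke the classical Erd\H os--Rado upper bound for hypergraph Ramsey numbers, which gives $r(K^3_n; k) \leq \twr_3(c'n)$ for an absolute tower bound with $c' = O(k \log k)$. Applied with $k = 2q$, this yields $r(K^3_n; 2q) \leq 2^{2^{cn}}$ with $c = O(q \log q)$, as desired. I do not anticipate any real obstacle here: the only item that deserves a sentence of justification is the fact that ``all cups'' (or ``all caps'') on a horizontally sorted $n$-point set implies convex position, and this follows by comparing the middle point of any larger cup-sub-configuration against the line through its endpoints.
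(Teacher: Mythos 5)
Your proposal is correct and follows essentially the same route as the paper: the paper also applies Tarsi's trick, refining the $q$-coloring to a $2q$-coloring by recording the orientation (clockwise/counterclockwise) of each triple, which for points ordered by $x$-coordinate is exactly your cup/cap distinction, and then invokes the Ramsey bound $r(K^3_n;2q)\leq \twr_3(cn)$.
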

\begin{proof}
Let $H = (V,E)$ be a complete geometric 3-hypergraph on $N = r(K^{3}_n;2q)$ vertices, and let $\chi$ be a $q$ coloring on the edges of $H$.  By fixing an ordering on the vertices $V = \{v_1,...,v_N\}$, we say that a triple $(v_{i_1},v_{i_2},v_{i_3})$, $i_1 < i_2 < i_3$, has a \emph{clockwise (counterclockwise) orientation}, if $v_{i_1}, v_{i_2}, v_{i_3}$ appear in clockwise (counterclockwise) order along the boundary of $conv(v_{i_1}\cup v_{i_2} \cup v_{i_3})$. Hence by Ramsey's theorem, there are $n$ points from $V$ for which every triple has the same color and the same orientation.  As observed by Tarsi (see Theorem 3.8 in \cite{comb}), these vertices must be in convex position.
\end{proof}

\begin{lemma}
\label{david2}
For $\ell\geq 4$ and $n \geq 4^{\ell}$, we have $g(K^{\ell}_n;q) \leq r(K^{\ell}_n;q +1 ) \leq \twr_{\ell}(cn)$, where $c = O(q\log q)$.

\end{lemma}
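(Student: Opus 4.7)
The plan is to mimic the proof of Lemma \ref{david}: augment the $q$-coloring of $\ell$-tuples by one extra color that absorbs all the geometrically ``bad'' tuples, apply the classical Erd\H os--Rado bound with $q+1$ colors, and then use Erd\H os--Szekeres to rule out the degenerate outcome.

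Let $V$ be a set of $N = r(K_n^\ell; q+1)$ points in the plane in general position, and let $\chi$ be a $q$-coloring of $\binom{V}{\ell}$. I define a $(q+1)$-coloring $\chi'$ by setting $\chi'(T) = \chi(T)$ if $T$ is in convex position and $\chi'(T) = q+1$ otherwise. Ramsey's theorem then furnishes a set $S \subseteq V$ with $|S| = n$, all of whose $\ell$-subsets share a common $\chi'$-color $i^{\ast}$.

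If $i^{\ast} \leq q$, then every $\ell$-subset of $S$ is in convex position, and I claim this forces $S$ itself to be in convex position (this is where the hypothesis $\ell \geq 4$ enters). Indeed, if some $p \in S$ lay inside $\conv(S \setminus \{p\})$, then by Carath\'eodory's theorem in the plane $p$ would lie inside a triangle $abc$ with $a,b,c \in S \setminus \{p\}$; adjoining any further $\ell - 4$ points of $S$ would then produce an $\ell$-subset of $S$ containing $\{p,a,b,c\}$ which is not in convex position, contradicting $\chi'(T) = i^{\ast} \leq q$. Thus $S$ is in convex position and we obtain a monochromatic convex geometric $\ell$-hypergraph on $n$ vertices under $\chi$. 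If instead $i^{\ast} = q+1$, then no $\ell$-subset of $S$ is in convex position; but $|S| = n \geq 4^{\ell} \geq \binom{2\ell-4}{\ell-2} \geq f(\ell)$ by equation (\ref{f}), so Erd\H os--Szekeres produces $\ell$ points of $S$ in convex position, a contradiction.

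The quantitative bound $r(K_n^{\ell}; q+1) \leq \twr_{\ell}(cn)$ with $c = O(q \log q)$ is the standard iterated Erd\H os--Rado estimate with the dependence on the number of colors tracked through the recursion (as already alluded to in the discussion surrounding equation (\ref{rl})), so no new work is needed there. The only non-routine point in the whole argument is Case~1 above, where local convexity of every $\ell$-subset must be upgraded to global convexity of $S$; the Carath\'eodory argument handles this in a single line once $\ell \geq 4$, which is exactly why the analogous result for $\ell = 3$ required the separate orientation trick of Lemma \ref{david}.
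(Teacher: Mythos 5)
Your proposal is correct and follows essentially the same route as the paper's proof: recolor non-convex $\ell$-tuples with a new color $q+1$, apply Ramsey with $q+1$ colors, and use Erd\H os--Szekeres together with $n \geq 4^{\ell}$ to exclude the color $q+1$. The only difference is that you explicitly justify, via Carath\'eodory, the upgrade from ``every $\ell$-subset of $S$ is in convex position'' to ``$S$ is in convex position,'' a step the paper leaves implicit.
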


\begin{proof}
Let $H = (V,E)$ be a complete geometric $\ell$-hypergraph on $N = r(K^{\ell}_n;q +1 )$ vertices, and let $\chi$ be a $q$ coloring on the $\ell$-tuples of $V$ with colors $1,2,...,q$.  Now if an $\ell$-tuple from $V$ is not in convex position, we change its color to the new color $q+1$.  By Ramsey's theorem, there is a set $S\subset V$ of $n$ points for which every $\ell$-tuple has the same color.  Since $n \geq 4^{\ell}$, by the Erd\H os-Szekeres Theorem, $S$ contains $\ell$ members in convex position.  Hence, every $\ell$-tuple in $S$ is in convex position, and has the same color which is not the new color $q+1$.  Therefore $S$ induces a monochromatic convex geometric $\ell$-hypergraph.

\end{proof}

Theorem \ref{main2} now follows by combining Lemma \ref{david} and \ref{david2}.

\section{A lower bound construction for geometric 3-hypergraphs}

In this section, we will prove Theorem \ref{lmain}, which follows immediately from the following lemma.

\begin{lemma}
For sufficiently large $n$, there exists a complete geometric 3-hypergraph $H = (V,E)$ in the plane with $2^{2^{\lfloor n/2\rfloor}}$ vertices, and a two-coloring $\chi'$ on the edge set $E$, such that $H$ does not contain a monochromatic convex 3-hypergraph on $2n$ vertices.
\end{lemma}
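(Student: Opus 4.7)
The plan is a geometric adaptation of the Erd\H os--Hajnal stepping-up construction. Let $m = 2^{\lfloor n/2 \rfloor}$; by Erd\H os's probabilistic bound $r(K_n,K_n) > 2^{n/2}$, fix a 2-coloring $\chi : \binom{[m]}{2} \to \{0,1\}$ with no monochromatic $K_n$. I will use $\chi$ to colour the triples of a planar point set $V = V_m$ of $N = 2^m$ points, labeled by binary strings in $\{0,1\}^m$, in such a way that a monochromatic convex $2n$-subset of $V$ would force a monochromatic $K_n$ in $\chi$, a contradiction.

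The set $V_j$ is built recursively. Let $V_0$ be a single point; form $V_j$ by placing a scaled ``$0$-copy'' of $V_{j-1}$ (labels whose leading bit is $0$) in a small bounding box in the lower-left and a scaled ``$1$-copy'' (leading bit $1$) in a small bounding box in the upper-right, with vertical offset chosen so large that for every point $a$ in the $0$-copy and every point $b$ in the $1$-copy the segment $\overline{ab}$ passes strictly above every $0$-copy point between them in $x$-order and strictly below every $1$-copy point between them in $x$-order. The $x$-order on $V_j$ then agrees with the lex order of labels. Define $\delta(s,s')$ to be the index of the most significant bit at which $s$ and $s'$ differ; a parity check shows $\alpha := \delta(s_1,s_2) \neq \delta(s_2,s_3) =: \beta$ whenever $s_1 < s_2 < s_3$. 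A case analysis on the leading-bit pattern establishes by induction on $j$ the structural property
\[
\text{(P)}\quad \text{for } s_1<s_2<s_3 \text{ in } V_j,\ \text{the triple is a cap iff } \alpha > \beta.
\]
When the three leading bits coincide, (P) is inherited from the inductive hypothesis inside the common copy; otherwise the leading-bit pattern is $0,0,1$ or $0,1,1$ (forcing $\alpha<\beta$ or $\alpha>\beta$, respectively), and the straddling condition above produces a cup or cap matching (P). Set $\chi'(\{s_1,s_2,s_3\}) := \chi(\{\alpha,\beta\})$; this is a well-defined 2-coloring of the triples of $V$.

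Now suppose $W \subseteq V$ is a set of $2n$ points in convex position that is monochromatic of colour $c$ under $\chi'$. The upper and lower hulls of $W$ share only its leftmost and rightmost vertices, so one of them contains $\geq n+1$ vertices; without loss of generality, $W$ contains a monochromatic $(n+1)$-cap $w_1 < w_2 < \cdots < w_{n+1}$. Applying (P) to consecutive triples forces $\delta_i := \delta(w_i, w_{i+1})$ to be strictly decreasing. The iterated identity $\delta(w_i,w_k) = \max(\delta_i,\ldots,\delta_{k-1})$ combined with strict decrease gives $\delta(w_i,w_j) = \delta_i$ and $\delta(w_j,w_k) = \delta_j$ for every $i<j<k$; therefore
\[
c \;=\; \chi'(\{w_i,w_j,w_k\}) \;=\; \chi(\{\delta_i,\delta_j\}) \quad \text{for all } 1 \leq i < j < k \leq n+1.
\]
Letting $k$ range freely shows $\chi(\{\delta_i,\delta_j\}) = c$ for every $1 \leq i < j \leq n$, so the $n$ distinct values $\{\delta_1,\ldots,\delta_n\}$ form a monochromatic $K_n$ in $\chi$, contradicting the choice of $\chi$.

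The main technical obstacle is in the recursive construction of $V_j$: at each level one must choose the bounding-box dimensions of the $0$- and $1$-copies together with the vertical offset between them so that the straddling condition underlying (P) holds at the current scale without disturbing the inductive (P) already valid inside each copy. A rapidly shrinking schedule of box dimensions paired with vertical offsets growing much faster than any earlier height should work, but the parameters must be chosen with care.
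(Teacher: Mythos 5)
Your construction and coloring are the same as the paper's: the recursively nested point set, the function $\delta$ recording the level at which two points first split into left/right copies, and the triple coloring $\chi'(s_1,s_2,s_3)=\chi(\delta(s_1,s_2),\delta(s_2,s_3))$ lifted from an Erd\H os coloring with no monochromatic $K_n$. The only place you diverge is in how convexity is converted into a monotone $\delta$-sequence, and your version is cleaner. The paper proves only a local statement (Observation 3.3: a local minimum of the sequence $\delta_i$ forces one of four consecutive points inside the convex hull of the other three) and then argues that a sequence of length $2n$ with no monotone run of length $n$ has two local extrema and hence a local minimum, splitting the endgame into two cases. You instead prove the full equivalence (P) that a triple is a cap iff $\delta(s_1,s_2)>\delta(s_2,s_3)$, extract an $(n+1)$-point cap or cup from the convex $2n$-set by the upper/lower-hull pigeonhole, and read off a strictly monotone $\delta$-sequence directly; the computation $\delta(w_i,w_j)=\delta_i$ via the max-identity and the resulting monochromatic $K_n$ on $\{\delta_1,\dots,\delta_n\}$ then matches the paper's Case 1. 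Both routes are correct; yours avoids the case analysis entirely, at the cost of verifying both directions of (P) (the paper only ever needs the implications used in Observation 3.3 and in orienting Case 1). Your closing caveat about choosing box sizes and offsets is not a real gap -- the paper asserts the analogous separation property of $L$ and $R$ with no more justification than you give.
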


\begin{proof}
Let $G$ be the complete graph on $2^{n/2}$ vertices, where $V(G) = \{1,...,2^{n/2}\}$, and let $\chi$ be a red-blue coloring on the edges of $G$ such that $G$ does not contain a monochromatic complete subgraph on $n$ vertices.  Such a graph does indeed exist by a result of Erd\H os \cite{erdos2}, who showed that $r(K_n,K_n) > 2^{n/2}$.  We will use $G$ and $\chi$ to construct a complete geometric 3-hypergraph $H$ on $2^{2^{n/2}}$ vertices, and a coloring $\chi'$ on the edges of $H$, with the desired properties.

Set $M = 2^{n/2}$.  We will recursively construct a point set $P_t$ of $2^t$ points in the plane as follows.  Let $P_1$ be a set of two points in the plane with distinct $x$-coordinates.  After obtaining the point set $P_t$, we define $P_{t + 1}$ follows.  We inductively construct two copies of $P_t$, $L = P_t$ and $R = P_t$, and place $L$ to the left of $R$ such that all lines determined by pairs of points in $L$ go below $R$ and all lines determined by pairs of points of $R$ go above $L$.  Then set $P_{t+1} = L\cup R$.  See Figure \ref{pr}.

\begin{figure}[h]
\begin{center}
\includegraphics[width=200pt]{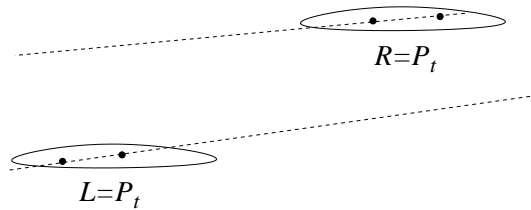}
  \caption{Constructing $P_{t+1}$ from $P_t$.}\label{pr}
 \end{center}
\end{figure}

Let $P_M = \{p_1,...,p_{2^M}\}$ be the set of $2^M$ points in the plane, ordered by increasing $x$-coordinate, from our construction.  Notice that $P_M$ contains $2^{M - t}$ disjoint copies of $P_t$.  For $i< j$, we define

$$\delta(p_i,p_j) = \max\{t: \textnormal{$p_i,p_j$ lies inside a copy of $P_{t} = L\cup R $, and $p_i\in L, p_j\in R$}\}.$$

\noindent Notice that

\begin{description}

\item[Property A:] $\delta(p_i,p_j) \not = \delta(p_j,p_k)$ for every triple $i < j < k$,

\item[Property B:] for $i_1 < \cdots < i_n$, $\delta(p_{i_1},p_{i_n}) = \max_{1 \leq j \leq n-1}\delta(p_{i_j},p_{i_j + 1})$.

\end{description}

Now we define a red-blue coloring $\chi'$ on the triples of $P_M$ as follows.  For $i < j < k$,

$$\chi'(p_i,p_j,p_k) = \chi(\delta(p_i,p_j),\delta(p_j,p_k)).$$

\noindent Now we claim that the geometric 3-hypergraph $H = (P_M,E)$ does not contain a monochromatic convex $3$-hypergraph on $2n$ vertices.  For sake of contradiction, let $S = \{q_1,...,q_{2n}\}$ be a set of $2n$ points from $P_M$, ordered by increasing $x$-coordinate, that induces a red convex 3-hypergraph.  Set $\delta_i = \delta(q_i,q_{i + 1})$.

\medskip
\noindent \emph{Case 1.}  Suppose that there exists a $j$ such that $\delta_j,\delta_{j + 1},...,\delta_{j + n - 1}$ forms a monotone sequence. First assume that

$$\delta_j > \delta_{j + 1} > \cdots >\delta_{j + n - 1}.$$

\noindent Since $G$ does not contain a red complete subgraph on $n$ vertices, there exists a pair $j\leq i_1 < i_2 \leq j+n - 1$ such that $(\delta_{i_1},\delta_{i_2})$ is blue. But then the triple $(q_{i_1},q_{i_2},q_{i_2 + 1})$ is blue, a contradiction. Indeed, by Property~B,

$$\delta(q_{i_1},q_{i_{2}}) = \delta(q_{i_1},q_{i_{1} + 1}) = \delta_{i_1}.$$

\noindent
Therefore, since $\delta_{i_1} > \delta_{i_2}$ and  $(\delta_{i_1},\delta_{i_2})$ is blue, the triple $(q_{i_1},q_{i_2},q_{i_2 + 1})$ must also be blue.  A similar argument holds if $\delta_j < \delta_{j + 1} < \cdots <\delta_{j + n - 1}.$

\medskip

\noindent \emph{Case 2.}  Suppose we are not in Case 1.  For $2 \leq i \leq 2n$, we say that $i$ is a {\it local minimum} if $\delta_{i-1}>\delta_i<\delta_{i+1}$, a {\it local maximum} if $\delta_{i-1}<\delta_i>\delta_{i+1}$, and a \emph{local extremum} if it is either a local minimum or a local maximum.  This is well defined by Property A.

\begin{observation}
\label{notconv}
 For $2 \leq i \leq 2n$, $i$ is never a local minimum.

\end{observation}

\begin{proof}
Suppose $\delta_{i-1}>\delta_i<\delta_{i+1}$ for some $i$, and suppose that $\delta_{i-1} \geq \delta_{i + 1}$.  We claim that $q_{i+1} \in \conv(q_{i-1},q_{i},q_{i + 2})$.  Indeed, since $\delta_{i-1} \geq \delta_{i + 1} > \delta_i$, this implies that $q_{i-1},q_{i},q_{i + 1},q_{i+2}$ lies inside a copy of  $P_{\delta_{i-1}} = L\cup R$, where $q_{i-1} \in L$ and $q_i,q_{i +1},q_{i + 2} \in R$.  Since $\delta_{i + 1} > \delta_i$, this implies that $q_i,q_{i + 1},q_{i + 2}$ lie inside a copy $P_{\delta_{i+1}} = L'\cup R' \subset R$, where $q_{i},q_{i + 1} \in L'$ and $q_{i + 2} \in R'$.

Notice that all lines determined by $q_{i},q_{i+1},q_{i+2}$ go above the point $q_{i-1}$.  Therefore $q_{i+1}$ must lie above the line that goes through the points $q_{i-1},q_{i + 2}$, and furthermore, $q_{i + 1}$ must lie below the line that goes through the points $q_{i-1},q_i$.  Since $\delta_{i + 1} > \delta_i$, the line through $q_{i},q_{i + 1}$ must go below the point $q_{i + 2}$, and therefore $q_{i + 1} \in \conv(q_{i-1},q_{i},q_{i + 2})$.  See Figure \ref{conv4d}.  If $\delta_{i-1} < \delta_{i + 1}$, then a similar argument shows that $q_{i} \in \conv(q_{i-1},q_{i + 1},q_{i + 2})$.

\end{proof}

\begin{figure}
\begin{center}
\includegraphics[width=200pt]{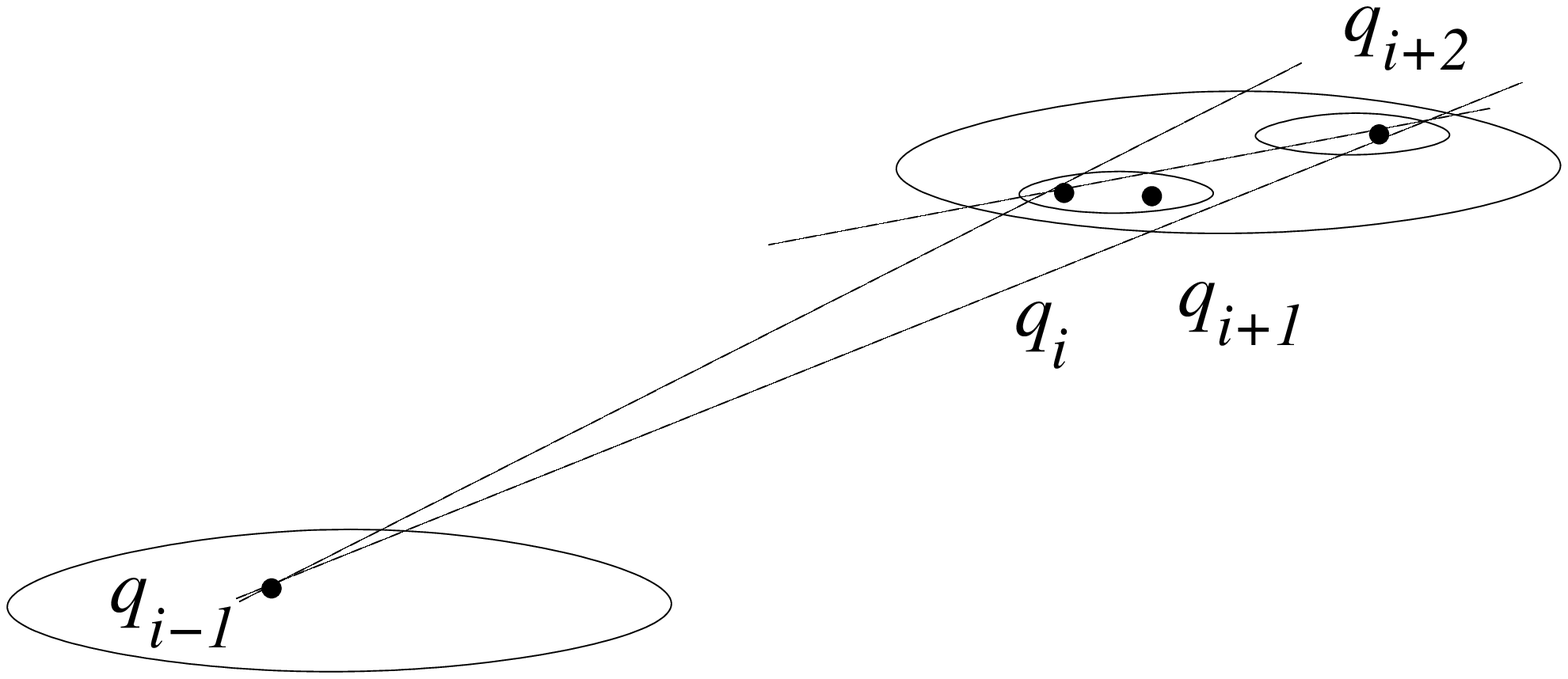}
  \caption{Point $q_{i + 1} \in\conv(q_{i-1},q_{i},q_{i + 2})$ .}\label{conv4d}
 \end{center}
\end{figure}

Since $\delta_1,\ldots,\delta_{2n}$ does not have a monotone subsequence of length $n$, it must have at least two local extrema.  Since between any two local maximums there must be a local minimum, we have a contradiction by Observation \ref{notconv}.  This completes the proof.

\end{proof}

\section{Concluding remarks}

For $q \geq 4$ colors and $\ell \geq 3$, we showed that $g(K_n^{\ell};q) = \twr_{\ell}(\Theta(n)).$  Our bounds on $g(K^{\ell}_n;q)$ for $q\leq 3$ can be summarized in the following table.

\medskip

\medskip
\medskip
\begin{center}
    \renewcommand*\arraystretch{2}
 \begin{tabular}{|c|c|c|}
   \hline
    & $q = 2$ & $q = 3$ \\\hline
   $\ell = 2$  & $2^{\Omega(n)} < g(K_n,K_n) \leq 2^{O(n^2\log n)}$  &  $2^{\Omega(n)} < g(K_n;3) \leq 2^{O(n^2\log n)}$      \\[.2cm]
    $ \ell = 3$ &  $g(K^3_n,K^3_n) = 2^{2^{\Theta(n)}}$   &   $ g(K^3_n;3) = 2^{2^{\Theta(n)}}$  \\
   $\ell \geq 4$  & $\twr_{\ell-1}(\Omega(n^2)) \leq g(K^{\ell}_n,K^{\ell}_n)  \leq \twr_{\ell}(O(n))$  &  $\twr_{\ell}(\Omega(\log^2n)) \leq g(K^{\ell}_n;3)  \leq \twr_{\ell}(O(n))$  \\
   \hline
 \end{tabular}
 \end{center}

\medskip
\medskip
\medskip
\noindent \textbf{Off-diagonal.}  The Ramsey number $r(K_s,K_n)$ is the minimum integer $N$ such that every red-blue coloring on the edges of a complete $N$-vertex graph $G$, contains either a red clique of size $s$, or a blue clique of size $n$.  The off-diagonal Ramsey numbers, i.e., $r(K_s,K_n)$ with $s$ fixed and $n$ tending to infinity, have been intensively studied. For example, it is known \cite{AKS,B,BK, Kim} that $R_2(3,n) =\Theta(n^2/\log n)$ and, for fixed $s > 3$,
\begin{equation}\label{offgraph}
c_1(\log n)^{1/(s-2)} \left(\frac{n}{\log n}\right)^{(s+1)/2} \leq r(K_s,K_n) \leq c_2\frac{n^{s-1}}{\log^{s-2}n}.\end{equation}

 Another interesting variant of Problem \ref{georam} is the following off-diagonal version.

\begin{problem}
Determine the minimum integer  $g(K_s,K_n)$, such that any red-blue coloring on the edges of a complete geometric graph $G$ on $g(K_s,K_n)$ vertices, yields either a red convex geometric graph on $s$ vertices, or a blue convex geometric graph on $n$ vertices.
\end{problem}

For fixed $s$, one can show that $g(K_s,K_n)$ grows single exponentially in $n$.  In particular

$$2^{n-1} + 1 \leq g(K_s,K_n) \leq 4^{4^sn}.$$

\noindent  The lower bound follows from the fact that $g(K_s,K_n) \geq f(n)$.  The upper bound follows from the inequalities

  $$g(K_s,K_n) \leq r(K_{4^s},K_{4^n})\leq \left(4^n\right)^{4^s}.$$

  Indeed, by the Erd\H os-Szkeres theorem, if $G$ contains a red-clique of size $4^s$, then there must be a red convex geometric graph on $s$ vertices.  Likewise, If $G$ contains a blue clique of size $4^n$, then there must be a blue convex geometric graph on $n$ vertices.

\medskip

\noindent \textbf{Higher dimensions.}  Generalizing Problem \ref{esn} to higher dimensions has also been studied.  Let $f_d(n)$ be the smallest integer such that any set of at least $f_d(n)$ points in $\mathbb{R}^d$ in general position\footnote{A point set $P$ in $\mathbb{R}^d$ is in general position, if no $d+1$ members lie on a common hyperplane.} contains $n$ members in convex position.  The following upper and lower bounds were obtained by K\'arolyi \cite{k} and K\'arolyi and Valtr \cite{kv} respectively,

$$2^{cn^{1/(d-1)}} \leq f_d(n) \leq {2n - 2d - 1\choose n - d} + d = 2^{2n(1 - o(1))}.$$

A \emph{geometric $\ell$-hypergraph} $H$ \emph{in $d$-space} is a pair $(V,E)$, where $V$ is a set of points in general position in $\mathbb{R}^d$, and $E\subset {V\choose \ell}$ is a collection of $\ell$-tuples from $V$.  When $\ell \leq d+1$, $\ell$-tuples are represented by $(\ell-1)$-dimensional simplices induced by the corresponding vertices.

\begin{problem}
Determine the minimum integer  $g_d(K_n^{\ell};q)$, such that any $q$-coloring on the edges of a complete geometric $\ell$-hypergraph $H$ in $d$-space on $g_{d}(K_n^{\ell};q)$ vertices, yields a monochromatic convex $\ell$-hypergraph on $n$ vertices.

\end{problem}

When $d = 2$, we write $g_{2}(K_n^{\ell};q) = g(K_n^{\ell};q)$.  Clearly $g_{d}(K_n^{\ell};q) \geq \max\{f_d(n), R(K^{\ell}_n;q)\}$.  One can also show that $g_{d}(K_n^{\ell};q) \leq g(K_n^{\ell};q)$.  Indeed, for any complete geometric $\ell$-hypergraph $H = (V,E)$ in $d$-space with a $q$-coloring $\chi$ on $E(H)$, one can obtain a complete geometric $\ell$-hypergraph in the plane $H' = (V',E')$, by projecting $H$ onto a 2-dimensional subspace $L\subset \mathbb{R}^d$ such that $V'$ is in general position in $L$.  Thus we have

$$g_d(K_n;q)\leq g(K_n;q) \leq 2^{cn^2\log n},$$

\noindent where $c= O(q\log q)$, and for $\ell \geq 3$

$$g_d(K^{\ell}_n;q) \leq g(K^{\ell}_n;q)\leq \twr_{\ell}(c'n^2),$$

\noindent  where  $c'=c'(q,\ell)$.

\medskip
\medskip

\noindent \textbf{Acknowledgment.}  We thank David Conlon for showing us an improved version of Theorem \ref{main2}.

\end{document}